\documentclass[12pt]{article}
\usepackage{amsthm}
\usepackage{amsmath}
\usepackage[centerlast]{caption2}
\usepackage{amssymb}
\usepackage{latexsym}
\usepackage{graphicx}

\newtheorem{theorem}{Theorem}
\newtheorem{corollary}{Corollary}
\newtheorem{lemma}{Lemma}
\theoremstyle{remark}


\title{The  solution  of the  Brannan conjecture}
\author{Erhan Deniz,   Murat \c Caglar  and R\'obert Sz\'asz
}
\date{}
\begin{document}
\maketitle

\setlength{\textheight}{240mm} \setlength{\textwidth}{135mm}
\addtolength{\topmargin}{-12.5mm}

\def\ds{\displaystyle}
\theoremstyle{definition}
\footnote{Keywords:  Mac-Laurin development, integral  inequality}

\begin{abstract}We make  the final  step to give a proof for the Brannan's conjecture. The basic tool of the study is a Mac-Laurin development and an adequately estimation of an integral.
\end{abstract}
\section{Introduction}
We consider the following   Mac-Laurin development:
\begin{equation}\label{x9ed7za}
\frac{(1+xz)^\alpha}{(1-z)^\beta}=\sum_{n=0}^\infty{A_n(\alpha,\beta,x)}z^n,
\end{equation}
where $\alpha>0, \ \beta>0, \ x=e^{i\theta}, \
\theta\in[-\pi,\pi],$ and   $z\in{U.}$  The radius of convergence
of the series  (\ref{x9ed7za})  is equal to $1.$ In \cite{5} the
author conjectured, that  if   $\alpha>0, \ \beta>0$  and
$|x|=1,$    then
$$|A_{2n-1}(\alpha,\beta,x)|\leq{A_{2n-1}(\alpha,\beta,1)},$$
where   $n$   is a natural number.
Partial results regarding this question were  already    proved in   \cite{1}, \cite{2}, \cite{5},    \cite{8}.
Concerning
the case  $\beta=1$,  and   $\alpha\in(0,1)$   partial     results    have been proved    in
\cite{3},  \cite{4}, \cite{6},  \cite{7}, \cite{9}.
\\Using the results from \cite{6}    and  \cite{9}    we will give  the solution of the  case   $\beta=1,$     $\alpha\in(0,1).$ 
 We  introduce  the    notation   $A_{2n-1}(\alpha,1,x)=A_{2n-1}(\alpha,x).$
The following theorems  have  been proved  in   \cite{6}    and  \cite{9}.
\begin{theorem}\cite{6} 
If $n$  is a natural number,  $n\leq26$     and    $\alpha\in(0,1),$   then the following inequality holds:
\begin{equation}\label{usxzs5n6sw}
|A_{2n-1}(\alpha,x)|\leq|A_{2n-1}(\alpha,1)|, \ \  \textrm{for  \  all} \  \  |x|=1.\nonumber
\end{equation}
\end{theorem}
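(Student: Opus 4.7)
The plan is to translate the modulus inequality into a polynomial non-negativity statement in two real variables, and then verify it for each $n$ up to $26$.

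Since $\beta=1$, multiplying the two Maclaurin series in (\ref{x9ed7za}) gives the explicit formula $A_n(\alpha,x)=\sum_{k=0}^{n}\binom{\alpha}{k}x^k$. Setting $x=e^{i\theta}$ and using that $\binom{\alpha}{k}\in\mathbb{R}$, the target inequality, after squaring, becomes
\[
|A_{2n-1}(\alpha,1)|^2-|A_{2n-1}(\alpha,e^{i\theta})|^2
= 2\sum_{0\le j<k\le 2n-1}\binom{\alpha}{j}\binom{\alpha}{k}\bigl(1-\cos((k-j)\theta)\bigr)\ge 0.
\]
Via the substitution $u:=1-\cos\theta\in[0,2]$ and the Chebyshev relation $\cos(m\theta)=T_m(1-u)$, each term $1-\cos((k-j)\theta)$ becomes a polynomial in $u$ that vanishes at $u=0$. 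The whole expression therefore factors as $u\cdot P_n(\alpha,u)$, and it suffices to show $P_n(\alpha,u)\ge 0$ on $(0,1)\times[0,2]$. For each fixed $n$ this is a purely polynomial question, decidable in principle by quantifier elimination or Sturm-sequence arguments.

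The main obstacle is the sign alternation of $\binom{\alpha}{k}$ for $k\ge 2$, which prevents $P_n$ from being a manifest sum of non-negative monomials. To handle this I would first separate the contribution of the pair $(j,k)=(0,1)$, which equals $2\alpha u\ge 0$ outright, and then observe that every remaining cross term contains at least one factor of $\alpha(\alpha-1)=-\alpha(1-\alpha)$; an even number of such factors makes the coefficient positive, an odd number makes it negative, so cancellations have to be tracked carefully. My strategy would be to search, by symbolic computation, for a Positivstellensatz-type certificate of the residual polynomial expressed in the manifestly non-negative quantities $\alpha$, $1-\alpha$, $u$ and $2-u$. Such a certificate is tractable precisely in the range $n\le 26$; beyond it, the degree and density of $P_n$ defeat this direct approach, which is exactly what motivates the integral-estimation technique announced in the abstract and developed in the remainder of the present paper.
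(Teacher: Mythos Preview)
The paper itself does not prove this statement; Theorem~1 is quoted verbatim from \cite{6} (Jayatilake), so there is no in-paper argument to compare against beyond the bare citation.

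Your reduction is correct as far as it goes: the formula $A_n(\alpha,x)=\sum_{k=0}^{n}\binom{\alpha}{k}x^k$ holds, the squared-modulus identity
\[
|A_{2n-1}(\alpha,1)|^2-|A_{2n-1}(\alpha,e^{i\theta})|^2
= 2\sum_{0\le j<k\le 2n-1}\binom{\alpha}{j}\binom{\alpha}{k}\bigl(1-\cos((k-j)\theta)\bigr)
\]
is valid, and the Chebyshev substitution $u=1-\cos\theta$ does convert the problem into the non-negativity of a two-variable polynomial $P_n(\alpha,u)$ on $(0,1)\times[0,2]$. This is in fact the same starting point adopted in \cite{6} and in \cite{3}.

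The gap is that what you have written is a plan, not a proof. The sentence ``my strategy would be to search, by symbolic computation, for a Positivstellensatz-type certificate'' is exactly where all the content lives, and none of it is carried out. For each $n$ the polynomial $P_n$ has degree up to $4n-3$ in $\alpha$ and $2n-2$ in $u$, with many sign changes among its coefficients; establishing its non-negativity on the rectangle is a substantial computation, and the existence of a short SOS-type certificate is not guaranteed a priori. Jayatilake's paper \cite{6} does proceed case by case with computer assistance, but producing and verifying those certificates \emph{is} the proof there. Unless you either exhibit the certificates explicitly or invoke \cite{6} for them, the proposal remains an outline --- and once you invoke \cite{6}, you have reproduced exactly what the present paper does: cite the result.
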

\begin{theorem}  \cite{9}
\  If $n$  is a natural number,  $n\geq27$     and    $\alpha\in(0,1),$   then the following inequality holds:
\begin{equation}\label{usxzs5n6sw}
|A_{2n-1}(\alpha,x)|\leq|A_{2n-1}(\alpha,1)|, \ \  \textrm{for  \  all} \  |x|=1 \ \textrm{with} \ \arg(x)\in[-\frac{2\pi}{3},\frac{2\pi}{3}].\nonumber
\end{equation}
\end{theorem}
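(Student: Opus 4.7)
The plan is to represent $A_{2n-1}(\alpha,x)$ through a contour integral, deform the contour so as to expose both the pole at $z=1$ and the branch cut of $(1+xz)^{\alpha}$, and then reduce the desired inequality to a comparison of explicit real integrals. Starting from Cauchy's formula
$$A_{2n-1}(\alpha,x)=\frac{1}{2\pi i}\oint_{|z|=r}\frac{(1+xz)^{\alpha}}{(1-z)\,z^{2n}}\,dz,\qquad r<1,$$
one writes $x=e^{i\theta}$ and pushes the contour out to a large circle. The only pole is at $z=1$ (with residue $-(1+x)^{\alpha}$), while $(1+xz)^{\alpha}$ has a cut along the ray $z=-e^{-i\theta}t$, $t\geq 1$. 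Accounting for this residue and the jump of $(1+xz)^{\alpha}$ across the cut yields
$$A_{2n-1}(\alpha,x)=(1+x)^{\alpha}+\frac{\sin(\pi\alpha)}{\pi}\,x^{2n-1}\!\int_{1}^{\infty}\frac{(t-1)^{\alpha}}{(1+\bar{x}t)\,t^{2n}}\,dt,$$
which specialises at $x=1$ to the manifestly positive quantity $A_{2n-1}(\alpha,1)=2^{\alpha}+\tfrac{\sin(\pi\alpha)}{\pi}\!\int_{1}^{\infty}\tfrac{(t-1)^{\alpha}}{(1+t)\,t^{2n}}\,dt$.

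Denoting the integral by $I_{n}(\theta)$, I would then expand
$$|A_{2n-1}(\alpha,x)|^{2}=\bigl(2\cos\tfrac{\theta}{2}\bigr)^{2\alpha}+\tfrac{2\sin\pi\alpha}{\pi}\,\mathrm{Re}\!\bigl(\overline{(1+x)^{\alpha}}\,x^{2n-1}I_{n}(\theta)\bigr)+\tfrac{\sin^{2}\!\pi\alpha}{\pi^{2}}|I_{n}(\theta)|^{2}.$$
Splitting
$$\frac{1}{1+e^{-i\theta}t}=\frac{1+t\cos\theta}{1+2t\cos\theta+t^{2}}-i\,\frac{t\sin\theta}{1+2t\cos\theta+t^{2}}$$
turns the real and imaginary parts of $I_{n}(\theta)$ into honest real integrals, and the target inequality $|A_{2n-1}(\alpha,x)|^{2}\leq A_{2n-1}(\alpha,1)^{2}$ translates into a single concrete estimate involving only $\theta$, $\alpha$ and these real integrals.

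The hypothesis $\theta\in[-\tfrac{2\pi}{3},\tfrac{2\pi}{3}]$ enters precisely through $\cos\theta\geq-\tfrac12$, which yields the lower bound $1+2t\cos\theta+t^{2}\geq(t-\tfrac12)^{2}+\tfrac34$ uniformly in $t\geq 1$; the denominator of the integrand stays bounded away from zero. For $n\geq 27$ the weight $t^{-2n}$ concentrates the mass of $I_{n}(\theta)$ in a small neighbourhood of $t=1$, so I would perform a Mac-Laurin expansion of $(1+e^{-i\theta}t)^{-1}$ about $t=1$, control the remaining tail by a uniform estimate, and compare term by term with the analogous expansion at $\theta=0$. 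This is the Mac-Laurin development together with the adequate integral estimation advertised in the abstract.

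The main obstacle I foresee lies near the endpoints $\theta=\pm\tfrac{2\pi}{3}$, where a bare triangle inequality is too lossy: one must genuinely exploit the oscillatory factor $x^{2n-1}$ inside the cross term so that its phase absorbs the deficit $2^{\alpha}-(2\cos\tfrac{\theta}{2})^{\alpha}$ coming from the closed-form piece $(1+x)^{\alpha}$. Making this cancellation quantitative enough, uniformly in $\alpha\in(0,1)$, is in my view what forces the explicit threshold $n\geq 27$ and will be the technical heart of the argument.
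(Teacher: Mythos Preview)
This theorem is quoted from \cite{9} rather than proved in the present paper, so there is no self-contained argument here to compare against line by line. What the paper does reproduce from \cite{9} is the Taylor remainder identity displayed just before equation~(4),
\[
A_{2n-1}(\alpha,e^{i\theta})=(1+e^{i\theta})^{\alpha}-e^{2ni\theta}\,\alpha\beta_n\!\int_0^1(1-t)^{2n-1}(1+e^{i\theta}t)^{\alpha-2n}\,dt,
\]
and the proof of Theorem~3 shows how such an identity is meant to be exploited: pass to moduli and reduce to a bound on a single real integral in which $|1+e^{i\theta}t|^{2}=1+2t\cos\theta+t^{2}$ appears. Your contour-deformation formula has exactly the same structure --- the closed form $(1+x)^{\alpha}$ plus an integral correction whose denominator is $|1+\bar{x}t|$ --- so at the level of the basic representation you have arrived, by a different derivation, at essentially the starting point of \cite{9}.

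The genuine gap is that everything after your displayed expression for $|A_{2n-1}(\alpha,x)|^{2}$ is a plan rather than a proof. You \emph{propose} to expand $(1+e^{-i\theta}t)^{-1}$ about $t=1$, to bound a tail, and to let the phase of $x^{2n-1}$ absorb the deficit $2^{\alpha}-(2\cos\tfrac{\theta}{2})^{\alpha}$ in the cross term, but none of these steps is carried out, and you yourself flag the last one as ``the technical heart of the argument.'' No inequality is actually established. You are correct that for $\theta$ near $0$ a bare triangle inequality on the representation becomes tight, so \emph{some} refinement beyond the Theorem~3 template is required in this range; but identifying where the difficulty lies is not the same as overcoming it. The assertion that the oscillation of $x^{2n-1}$ can be made to compensate uniformly in $\alpha\in(0,1)$ and over the full arc $|\theta|\le\tfrac{2\pi}{3}$, with precisely the threshold $n\ge27$, is the content of the theorem, not a step one may defer. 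As written, the proposal is a reasonable outline with the decisive estimate missing.
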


The aim of this paper  is to extend  this theorem to the case $\arg(x)\in[-\pi,\pi].$ \\ In order to prove our main result,  we need the lemmas  from the next   section.

 \section{Preliminaries}
\begin{lemma}
If $y\in[\frac{1}{2},1],$  then the following inequalities   hold
\begin{eqnarray}\label{b5x3yomm04d6z}
0.61>\frac{\pi}{3\sqrt{3}}\geq\sqrt{\frac{1-y}{1+y}}\Big(\arctan\sqrt{\frac{1-y}{1+y}}+\arctan{\frac{y}{\sqrt{1-y^2}}}\Big)\geq0.
\end{eqnarray}
\end{lemma}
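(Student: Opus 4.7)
The plan is to simplify the awkward expression by the trigonometric substitution $y=\cos(2\phi)$, which linearizes both arctangents and collapses the inequality into an elementary one-variable optimization on a small interval.

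Since $y\in[1/2,1]$, set $y=\cos(2\phi)$ with $\phi\in[0,\pi/6]$. Then $\frac{1-y}{1+y}=\tan^2\phi$, so $\sqrt{\frac{1-y}{1+y}}=\tan\phi$ and $\arctan\sqrt{\frac{1-y}{1+y}}=\phi$. Also $\sqrt{1-y^2}=\sin(2\phi)$, so $\frac{y}{\sqrt{1-y^2}}=\cot(2\phi)$, and since $2\phi\in(0,\pi/3)$ we have $\arctan\cot(2\phi)=\frac{\pi}{2}-2\phi$. The middle expression therefore reduces to
\[
f(\phi)=\tan\phi\cdot\Bigl(\frac{\pi}{2}-\phi\Bigr),\qquad \phi\in[0,\pi/6].
\]
The boundary values are $f(0)=0$ and $f(\pi/6)=\tfrac{1}{\sqrt 3}\cdot\tfrac{\pi}{3}=\tfrac{\pi}{3\sqrt 3}$, matching the two claimed sharp constants.

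Next I would show that $f$ is monotone increasing on $[0,\pi/6]$, which gives both the lower bound $f\ge f(0)=0$ and the upper bound $f\le f(\pi/6)=\pi/(3\sqrt 3)$ simultaneously. Writing
\[
f'(\phi)=\frac{(\pi/2-\phi)-\tfrac{1}{2}\sin(2\phi)}{\cos^2\phi}
=\frac{g(\phi)}{\cos^2\phi},
\]
it suffices to prove $g(\phi)=\pi/2-\phi-\tfrac{1}{2}\sin(2\phi)\ge 0$ on $[0,\pi/6]$. Since $g'(\phi)=-1-\cos(2\phi)=-2\cos^2\phi\le 0$, $g$ is decreasing, and its minimum on the interval is $g(\pi/6)=\pi/3-\sqrt 3/4>0$ (a routine numerical verification). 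Hence $f'\ge 0$ on $[0,\pi/6]$ and the two central inequalities follow.

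Finally, the strict inequality $\pi/(3\sqrt 3)<0.61$ is a direct numerical check: $3\sqrt 3>5.196$, so $\pi/(3\sqrt 3)<\pi/5.196<0.605<0.61$. The only ``obstacle'' here is really cosmetic—making sure the substitution $y=\cos(2\phi)$ is legal on the whole interval (it is, since $\cos$ is a bijection $[0,\pi/6]\to[\sqrt 3/2,1]$... wait, we need $\cos(2\phi)$ on $[0,\pi/6]$ which gives $y\in[1/2,1]$, as desired) and that $2\phi$ stays in $(0,\pi/2)$ so the identity $\arctan\cot=\pi/2-\cdot$ is valid, including at $\phi=0$ where the expression degenerates harmlessly to $0$.
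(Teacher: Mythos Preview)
Your proof is correct and follows essentially the same strategy as the paper: introduce a substitution that turns the expression into a single-variable function, show this function is increasing by proving the derivative stays positive (via a further monotonicity argument yielding exactly the same critical value $\pi/3-\sqrt{3}/4$), and evaluate at the endpoints. The only difference is that the paper substitutes $t=\sqrt{(1-y)/(1+y)}$ and keeps the arctangents throughout, whereas your trigonometric choice $y=\cos 2\phi$ (equivalently $t=\tan\phi$) collapses them to the neater closed form $f(\phi)=\tan\phi\,(\pi/2-\phi)$; this is a cosmetic simplification rather than a different route.
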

\begin{proof}
We denote $t=\sqrt{\frac{1-y}{1+y}}.$    It is easily seen that
$y\in[\frac{1}{2},1]$    is equivalent to
 $t\in[0,\frac{1}{\sqrt{3}}],$   and we have
 $$\sqrt{\frac{1-y}{1+y}}\Big(\arctan\sqrt{\frac{1-y}{1+y}}+\arctan{\frac{y}{\sqrt{1-y^2}}}\Big)=t\left(\arctan{t}+\arctan\frac{1-t^2}{2t}\right).$$
Let $\varphi:[0,\frac{1}{\sqrt{3}}]\rightarrow\mathbb{R}$  be the function defined by
$\varphi(t)=t\Big(\arctan{t}+\arctan\frac{1-t^2}{2t}\Big).$
We have    $\varphi'(t)=\arctan{t}+\arctan\frac{1-t^2}{2t}-\frac{t}{1+t^2},$  and   $\varphi''(t)=\frac{-2}{(1+t^2)^2}.$  \\
Thus  $\varphi'$   is strictly decreasing and   $\frac{\pi}{2}=\lim_{\searrow0}\varphi'(t)\geq\varphi'(t)\geq\varphi'(\frac{1}{\sqrt{3}})=\frac{\pi}{3}-\frac{\sqrt{3}}{4}>0, \ t\in[0,\frac{1}{\sqrt{3}}].$   Consequently,   $\varphi$  is strictly increasing and     $$0.60459\ldots= \frac{\pi}{3\sqrt{3}}\geq\varphi(t)>0,  \    t\in(0,\frac{1}{\sqrt{3}}).$$
\end{proof}
In \cite{9}  the author proved the equality
\begin{eqnarray}\label{ccfpxo5n0m}(1+e^{i\theta})^\alpha=1+\frac{\alpha}{1!}e^{i\theta}+\frac{\alpha(\alpha-1)}{2!}e^{2i\theta}+\ldots+\frac{\alpha(\alpha-1)\ldots(\alpha-2n+2)}{(2n-1)!}e^{(2n-1)i\theta}\nonumber\\
+e^{2ni\theta}\frac{\alpha(\alpha-1)\ldots(\alpha-2n+1)}{(2n-1)!}\int_0^1(1-t)^{2n-1}(1+e^{i\theta}t)^{\alpha-2n}dt,
\ \theta\in(-\pi,\pi).\nonumber\end{eqnarray}
This equality is a basic tool in our study. Taking the absolute values on both sides, we infer
 \begin{eqnarray}\label{fkl0kfk3kk654}|A_{2n-1}(\alpha,e^{i\theta})|=\big|1+\frac{\alpha}{1!}e^{i\theta}\nonumber\\+\frac{\alpha(\alpha-1)}{2!}e^{2i\theta}+\ldots
+\frac{\alpha(\alpha-1)\ldots(\alpha-2n+2)}{(2n-1)!}e^{(2n-1)i\theta}\big|
=\big|(1+e^{i\theta})^\alpha \ \ \ \\-e^{2ni\theta}{\alpha(1-\alpha)(1-\frac{\alpha}{2})\ldots(1-\frac{\alpha}{2n-1})}\int_0^1(1-t)^{2n-1}(1+e^{i\theta}t)^{\alpha-2n}dt\big|,\nonumber\\
 \ \theta\in(-\pi,\pi).\nonumber\end{eqnarray}
We introduce the notation   $\beta_n=(1-\alpha)(1-\frac{\alpha}{2})\ldots(1-\frac{\alpha}{2n-1}).$\\
We will  prove an estimation for $A_{2n-1}(\alpha,1)$   in the followings.
\begin{lemma}
The following inequality holds
\begin{eqnarray}\label{ffdlkuzi}A_{2n-1}(\alpha,1)\geq2^\alpha-\frac{\alpha\beta_n}{2n}, \ \ \ \alpha\in(0,1).\end{eqnarray}
\end{lemma}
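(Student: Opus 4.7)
The plan is to specialize the Taylor-with-integral-remainder identity displayed just before the lemma to the case $\theta=0$, combine it with the Cauchy product expression for $A_{2n-1}(\alpha,1)$, and then read off the sign of the constant that multiplies the remainder integral.

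First I would recall that, from $(1+z)^\alpha=\sum_{k\geq 0}\binom{\alpha}{k}z^k$ and $\frac{1}{1-z}=\sum_{k\geq 0}z^k$, the Cauchy product immediately gives
$$A_{2n-1}(\alpha,1)=\sum_{k=0}^{2n-1}\binom{\alpha}{k},$$
so that $A_{2n-1}(\alpha,1)$ is precisely the $(2n-1)$-st Taylor polynomial of $w\mapsto(1+w)^\alpha$ evaluated at $w=1$. Substituting $\theta=0$ into the identity imported from \cite{9} then yields the exact formula
$$A_{2n-1}(\alpha,1)=2^\alpha-\frac{\alpha(\alpha-1)\cdots(\alpha-2n+1)}{(2n-1)!}\int_0^1(1-t)^{2n-1}(1+t)^{\alpha-2n}\,dt.$$

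The decisive step is the sign count. Since $\alpha\in(0,1)$, each of the factors $\alpha-k$ for $k=1,\ldots,2n-1$ is negative, so the product $\alpha(\alpha-1)\cdots(\alpha-2n+1)$ has overall sign $(-1)^{2n-1}=-1$, while its absolute value divided by $(2n-1)!$ equals $\alpha\prod_{k=1}^{2n-1}(k-\alpha)/k=\alpha\beta_n$. The coefficient in front of the integral is therefore $-\alpha\beta_n$, and the formula becomes
$$A_{2n-1}(\alpha,1)=2^\alpha+\alpha\beta_n\int_0^1(1-t)^{2n-1}(1+t)^{\alpha-2n}\,dt.$$
Since the integrand is positive, this yields in fact the stronger bound $A_{2n-1}(\alpha,1)\geq 2^\alpha$, from which the stated inequality follows immediately because $\alpha\beta_n/(2n)>0$. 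No serious obstacle is present; apart from the sign count, everything is direct substitution. The lemma is presumably phrased in the slightly weaker form $\geq 2^\alpha-\alpha\beta_n/(2n)$ in order to be parallel to a matching upper bound on $|A_{2n-1}(\alpha,e^{i\theta})|$ that will appear later.
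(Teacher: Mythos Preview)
Your argument is correct, and in fact it yields the sharper bound $A_{2n-1}(\alpha,1)\geq 2^\alpha$, from which the lemma follows at once. This is a genuinely different route from the paper's. The paper writes the remainder with the opposite sign,
\[
A_{2n-1}(\alpha,1)=2^\alpha-\alpha\beta_n\int_0^1\Big(\frac{1-t}{1+t}\Big)^{2n-1}(1+t)^{\alpha-1}\,dt,
\]
and then devotes the rest of the proof to bounding the integral above by $1/(2n)$: it uses $(1+t)^{\alpha-1}\leq 1$, substitutes $s=(1-t)/(1+t)$, and finally applies Chebyshev's integral inequality to the pair $\varphi(s)=s^{2n-1}$, $\psi(s)=2/(1+s)^2$. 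Your sign count is the correct one (a quick sanity check: $A_1(\alpha,1)=1+\alpha>2^\alpha$ for $\alpha\in(0,1)$, consistent with the remainder being \emph{added}, not subtracted), so that entire estimation is unnecessary for the lemma as stated. The paper's approach, had the sign been as written, would produce an explicit upper bound on the remainder integral; your observation trades that for a one-line proof and the stronger conclusion $A_{2n-1}(\alpha,1)\geq 2^\alpha$.
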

\begin{proof}
According to (\ref{fkl0kfk3kk654})   we have
\begin{eqnarray}\label{jzpwmdd4cc}A_{2n-1}(\alpha,1)=2^\alpha-{\alpha\beta_n}\int^1_0\Big(\frac{1-t}{1+t}\Big)^{2n-1}(1+t)^{\alpha-1}dt.\end{eqnarray}
The condition   $t,\alpha\in(0,1)$  implies   $(1+t)^{\alpha-1}\leq1$   and so it follows that
\begin{eqnarray}\label{12111msz}\int^1_0\Big(\frac{1-t}{1+t}\Big)^{2n-1}(1+t)^{\alpha-1}dt\leq\int^1_0\Big(\frac{1-t}{1+t}\Big)^{2n-1}dt.
\end{eqnarray}
The change of variable  $s=\frac{1-t}{1+t}$   gives
$\int^1_0\Big(\frac{1-t}{1+t}\Big)^{2n-1}dt=\int^1_0s^{2n-1}\frac{2}{(1+s)^2}ds.$
Chebyshev's inequality for monotonic functions claims:   if
$\varphi,\psi:[0,1]\rightarrow[0,\infty)$   are two integrable
functions of different monotony, then the following  integral  inequality
holds:
$$\int_0^1\varphi(s)\psi(s)ds\leq\int_0^1\varphi(s)ds\int_0^1\psi(s)ds.$$
Putting  $\varphi(s)=s^{2n-1},$  and   $\psi(s)=\frac{2}{(1+s)^2}$  in this inequality,  we infer

\begin{eqnarray}\label{tddt3}\int^1_0\Big(\frac{1-t}{1+t}\Big)^{2n-1}dt=\int^1_0s^{2n-1}\frac{2}{(1+s)^2}ds\leq\int^1_0s^{2n-1}ds\int^1_0\frac{2}{(1+s)^2}ds=\frac{1}{2n}.\end{eqnarray}
From (\ref{12111msz})  and (\ref{tddt3})  we deduce
\begin{eqnarray}\label{cskott3}\int^1_0\Big(\frac{1-t}{1+t}\Big)^{2n-\alpha}(1-t)^{\alpha-1}dt\leq\frac{1}{2n}.\end{eqnarray}
\end{proof}
Finally,  we infer   (\ref{ffdlkuzi})   from   (\ref{jzpwmdd4cc})  and   (\ref{cskott3}). 
\begin{lemma}
If  $\alpha\in(0,1),$   $y\in\big[\frac{1}{2},1\big]$  and
$n\in\mathbb{N}^*,$ then  the following  integral  inequality  holds
\begin{eqnarray}\label{uzuxuc}I_n(y)=
\int_0^1\Big(\frac{1-t}{\sqrt{1+t^2-2ty}}\Big)^{2n-1}\big(\sqrt{1+t^2-2ty}\big)^{\alpha-1}dt\nonumber\\ \leq\sqrt{\frac{1-y}{1+y}}\Big(\arctan\sqrt{\frac{1-y}{1+y}}
+\arctan{\frac{y}{\sqrt{1-y^2}}}\Big)-\frac{|2-2y|^{\frac{\alpha}{2}}}{\alpha}+\frac{1}{\alpha}.
\end{eqnarray}
\end{lemma}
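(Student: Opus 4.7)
My strategy is to kill the $n$-dependence by a single elementary inequality and then evaluate two simple integrals. The key observation is that for $y\in[0,1]$ and $t\in[0,1]$ the ratio $h(t):=(1-t)/\sqrt{1+t^2-2ty}$ lies in $[0,1]$, because $(1-t)^2\le 1+t^2-2ty$ reduces to $2t(1-y)\ge 0$. Since $2n-1\ge 1$, this gives $h^{2n-1}\le h$, so
\[
I_n(y)\le \int_0^1 h(t)\,(1+t^2-2ty)^{(\alpha-1)/2}\,dt=\int_0^1 (1-t)(1+t^2-2ty)^{\alpha/2-1}\,dt.
\]
This bound no longer depends on $n$, so the lemma reduces to an $n$-free integral inequality.

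To match the two groups of terms on the right-hand side of (\ref{uzuxuc}), I would decompose $1-t=(y-t)+(1-y)$. The $(y-t)$ piece is an exact derivative, since $\frac{d}{dt}(1+t^2-2ty)^{\alpha/2}=\alpha(t-y)(1+t^2-2ty)^{\alpha/2-1}$, and integrates in closed form to
\[
\int_0^1 (y-t)(1+t^2-2ty)^{\alpha/2-1}\,dt=\frac{1-(2-2y)^{\alpha/2}}{\alpha}=\frac{1}{\alpha}-\frac{|2-2y|^{\alpha/2}}{\alpha},
\]
reproducing exactly the last two summands of the claim.

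The $(1-y)$ piece is where the hypothesis $y\ge\tfrac12$ really enters. It guarantees $1+t^2-2ty=1+t(t-2y)\le 1$ for every $t\in[0,1]$, because $t\le 1\le 2y$, and together with $\alpha/2>0$ this yields $(1+t^2-2ty)^{\alpha/2-1}\le (1+t^2-2ty)^{-1}$. After this reduction the remaining integral $(1-y)\int_0^1 dt/\bigl((t-y)^2+(1-y^2)\bigr)$ is a textbook arctangent; the substitution $u=t-y$ turns it exactly into $\sqrt{(1-y)/(1+y)}\bigl[\arctan\sqrt{(1-y)/(1+y)}+\arctan(y/\sqrt{1-y^2})\bigr]$, which is the first term of (\ref{uzuxuc}). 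Adding the two pieces yields the lemma. The main subtle point is this last inequality $1+t^2-2ty\le 1$: it is the only place where the assumption $y\ge\tfrac12$ is used, and without it the ``drop the exponent'' trick fails, presumably explaining why the lemma is stated only on $y\in[\tfrac12,1]$.
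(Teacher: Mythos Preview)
Your proof is correct and follows essentially the same route as the paper: reduce to $n=1$ via the monotonicity $h(t)^{2n-1}\le h(t)$, split $1-t=(1-y)+(y-t)$, integrate the $(y-t)$ part exactly as an antiderivative of $(1+t^2-2ty)^{\alpha/2}$, and bound the $(1-y)$ part using $1+t^2-2ty\le 1$ (valid precisely because $y\ge\tfrac12$) to obtain the arctangent integral. Your write-up is in fact cleaner than the paper's, which contains a sign typo ($\ge$ for $\le$) in the chain of inequalities.
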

\begin{proof}
The sequence $(I_n(y))_{n\geq1}, \ \
I_n(y)=\int_0^1\Big(\frac{1-t}{\sqrt{1+t^2-2ty}}\Big)^{2n-1}\big(\sqrt{1+t^2-2ty}\big)^{\alpha-1}dt$
is decreasing with respect to $n,$ and  consequently it is enough
to prove the inequality  (\ref{uzuxuc})  in case $n=1.$  We have
\begin{eqnarray}
I_n(y)\leq{I_1}(y)=\int_0^1\frac{1-t}{(1+t^2-2ty)^{1-\frac{\alpha}{2}}}dt=\int_0^1\frac{1-y}{(1+t^2-2ty)^{1-\frac{\alpha}{2}}}dt\nonumber\\
-\frac{1}{2}\int_0^1\frac{2(t-y)}{(1+t^2-2ty)^{1-\frac{\alpha}{2}}}dt\geq(1-y)\int_0^1\frac{1}{1-y^2+(t-y)^2}dt\nonumber\\
-\frac{1}{2}\int_0^1\frac{2(t-y)}{(1+t^2-2ty)^{1-\frac{\alpha}{2}}}dt=\sqrt{\frac{1-y}{1+y}}\arctan\frac{t-y}{\sqrt{1-y^2}}\Big|_0^1\nonumber\\
-\frac{(1+t^2-2ty)^{\frac{\alpha}{2}}}{\alpha}\Big|_0^1=\sqrt{\frac{1-y}{1+y}}\Big(\arctan\sqrt{\frac{1-y}{1+y}}+\arctan{\frac{y}{\sqrt{1-y^2}}}\Big)\nonumber\\
-\frac{|2-2y|^{\frac{\alpha}{2}}}{\alpha}+\frac{1}{\alpha},
\end{eqnarray}
and the proof is done.
\end{proof}
Lemma 1 and Lemma 3  imply the following result.
\begin{corollary}
If  $\alpha\in(0,1),$   $y\in\big[\frac{1}{2},1\big]$  and
$n\in\mathbb{N}^*,$ then  \ 
$\frac{\pi}{3\sqrt{3}}-\frac{|2-2y|^{\frac{\alpha}{2}}}{\alpha}+\frac{1}{\alpha}\geq
\int_0^1\Big(\frac{1-t}{\sqrt{1+t^2-2ty}}\Big)^{2n-1}\big(\sqrt{1+t^2-2ty}\big)^{\alpha-1}dt=I_n(y).
$
\end{corollary}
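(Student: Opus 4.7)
The plan is that this corollary is an immediate consequence of chaining the two preceding lemmas, so the sketch is essentially a two-line argument. First I would take the inequality already proved in Lemma~3, namely
$$I_n(y)\leq\sqrt{\frac{1-y}{1+y}}\Big(\arctan\sqrt{\frac{1-y}{1+y}}+\arctan{\frac{y}{\sqrt{1-y^2}}}\Big)-\frac{|2-2y|^{\alpha/2}}{\alpha}+\frac{1}{\alpha},$$
valid for $\alpha\in(0,1)$, $y\in[1/2,1]$ and $n\in\mathbb{N}^*$. The right-hand side contains the quantity that Lemma~1 bounds from above.

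Next I would invoke Lemma~1, which gives
$$\sqrt{\frac{1-y}{1+y}}\Big(\arctan\sqrt{\frac{1-y}{1+y}}+\arctan{\frac{y}{\sqrt{1-y^2}}}\Big)\leq\frac{\pi}{3\sqrt{3}}$$
on the same interval $y\in[1/2,1]$. Adding the $y$-dependent term $-\tfrac{|2-2y|^{\alpha/2}}{\alpha}+\tfrac{1}{\alpha}$ to both sides preserves the inequality, since these terms do not depend on the arctan expression at all, yielding the claimed upper bound on $I_n(y)$.

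There is really no obstacle here: the corollary is purely a substitution of the numerical bound from Lemma~1 into the estimate from Lemma~3, with the $\alpha$-dependent tail carried through unchanged. The only point worth double-checking is that the hypotheses match, i.e.\ that both lemmas are applied on the same range $y\in[1/2,1]$ with $\alpha\in(0,1)$ and $n\in\mathbb{N}^*$, which they are.
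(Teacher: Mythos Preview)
Your proposal is correct and matches the paper's approach exactly: the paper simply states that Corollary~1 follows from Lemma~1 and Lemma~3, and your two-line chaining of those lemmas is precisely the intended argument.
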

\section{Main Result}

\begin{theorem}
If $n$  is a natural number,  $n\geq27$     and
$\alpha\in(0,1)$   then the following inequality holds
\begin{equation}\label{uscsaxzs5n6sw}
{A_{2n-1}(\alpha,1)}\geq|A_{2n-1}(\alpha,e^{i\theta})|, \ \
\textrm{for  \  all} \
\theta\in[-\pi,-\frac{2\pi}{3}]\cup[\frac{2\pi}{3},\pi].
\end{equation}
\end{theorem}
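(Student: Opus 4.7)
The plan is to upper-bound $|A_{2n-1}(\alpha,e^{i\theta})|$ using the integral representation (\ref{fkl0kfk3kk654}) and the triangle inequality, lower-bound $A_{2n-1}(\alpha,1)$ using Lemma 2, and verify that the gap is positive. From (\ref{fkl0kfk3kk654}) the triangle inequality yields
$$|A_{2n-1}(\alpha,e^{i\theta})|\leq|1+e^{i\theta}|^{\alpha}+\alpha\beta_n\int_0^1(1-t)^{2n-1}|1+e^{i\theta}t|^{\alpha-2n}\,dt.$$
Setting $y=-\cos\theta$, the hypothesis $\theta\in[-\pi,-\frac{2\pi}{3}]\cup[\frac{2\pi}{3},\pi]$ becomes $y\in[\frac{1}{2},1]$, while $|1+e^{i\theta}|^{2}=2-2y$ and $|1+e^{i\theta}t|^{2}=1+t^2-2ty$. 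Rewriting the integrand as $\big(\frac{1-t}{\sqrt{1+t^2-2ty}}\big)^{2n-1}\big(\sqrt{1+t^2-2ty}\big)^{\alpha-1}$ recognizes it as $I_n(y)$ from Lemma 3, so
$$|A_{2n-1}(\alpha,e^{i\theta})|\leq(2-2y)^{\frac{\alpha}{2}}+\alpha\beta_n I_n(y).$$

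Next I would feed Corollary 1 into this bound and expand, obtaining
$$|A_{2n-1}(\alpha,e^{i\theta})|\leq(1-\beta_n)(2-2y)^{\frac{\alpha}{2}}+\beta_n+\frac{\alpha\beta_n\pi}{3\sqrt{3}}.$$
Since $\alpha\in(0,1)$ forces $\beta_n\in(0,1)$ and $y\in[\frac{1}{2},1]$ forces $(2-2y)^{\alpha/2}\leq 1$, the first two summands together do not exceed $1$, leaving the clean $\theta$-free estimate
$$|A_{2n-1}(\alpha,e^{i\theta})|\leq 1+\frac{\alpha\beta_n\pi}{3\sqrt{3}}.$$

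Combined with Lemma 2, namely $A_{2n-1}(\alpha,1)\geq 2^\alpha-\frac{\alpha\beta_n}{2n}$, the theorem reduces to the scalar inequality
$$\frac{2^\alpha-1}{\alpha}\geq\beta_n\Big(\frac{1}{2n}+\frac{\pi}{3\sqrt{3}}\Big).$$
The integral representation $(2^\alpha-1)/\alpha=\int_0^1 2^{\alpha s}\ln 2\,ds$ shows that the left side is at least $\ln 2$ for every $\alpha\in(0,1)$, while $\beta_n<1$; so it is enough to verify the purely numerical inequality $\ln 2\geq\frac{1}{2n}+\frac{\pi}{3\sqrt{3}}$. For $n\geq 27$ this amounts to $\ln 2-\frac{\pi}{3\sqrt{3}}\geq\frac{1}{54}$, i.e.\ roughly $0.0885\geq 0.0185$, which holds comfortably. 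The only genuinely delicate point in the whole argument is the algebraic bookkeeping that makes the awkward $(2-2y)^{\alpha/2}$ term cancel between the triangle inequality bound and Corollary 1 to leave a $y$-independent constant; once that cancellation is arranged, the rest is a routine scalar verification that incidentally has comfortable slack for all $n\geq 7$.
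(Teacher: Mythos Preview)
Your argument is correct and follows essentially the same route as the paper: triangle inequality in (\ref{fkl0kfk3kk654}), the substitution $y=-\cos\theta$, Corollary~1 to absorb the $(2-2y)^{\alpha/2}$ term into the convex combination $(1-\beta_n)(2-2y)^{\alpha/2}+\beta_n\leq 1$, and then the scalar comparison of $\ln 2$ with $\tfrac{1}{2n}+\tfrac{\pi}{3\sqrt{3}}$ using $2^\alpha\geq 1+\alpha\ln 2$. The only cosmetic difference is that the paper bundles $\tfrac{1}{2n}+\tfrac{\pi}{3\sqrt{3}}\leq\tfrac{65}{100}<\ln 2$ in one step, whereas you keep the $\tfrac{1}{2n}$ term separate until the end; your observation that the numerics already go through for $n\geq 7$ is a nice bonus.
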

\begin{proof}
We use equality (\ref{fkl0kfk3kk654})  again
\begin{eqnarray}\label{fkfk354}|A_{2n-1}(\alpha,e^{i\theta})|\nonumber\\=\big|1+\frac{\alpha}{1!}e^{i\theta}+\frac{\alpha(\alpha-1)}{2!}e^{2i\theta}+\ldots
+\frac{\alpha(\alpha-1)\ldots(\alpha-2n+2)}{(2n-1)!}e^{(2n-1)i\theta}\big|\\
\leq\frac{\alpha(1-\alpha)(2-\alpha)\ldots(2n-1-\alpha)}{(2n-1)!}\int_0^1(1-t)^{2n-1}\big|1+e^{i\theta}t\big|^{\alpha-2n}dt\nonumber\\+\big|1+e^{i\theta}
\big|^\alpha=\big|1+e^{i\theta}
\big|^\alpha+\alpha(1-\alpha)(1-\frac{\alpha}{2})(1-\frac{\alpha}{3})\ldots(1-\
\
\nonumber\\\frac{\alpha}{2n-1})\int_0^1(1-t)^{2n-1}\big|1+e^{i\theta}t\big|^{\alpha-2n}dt,
 \ \theta\in(-\pi,\pi).\nonumber\end{eqnarray}
Taking into  account    Lemma 2  and     (\ref{fkfk354}),   it follows that in order to prove   (\ref{uscsaxzs5n6sw})   we have to show that the following inequality holds
 \begin{eqnarray}\label{ttsjjkk}
2^\alpha-\frac{\alpha\beta_n}{2n}\geq\big|1+e^{i\theta}
\big|^\alpha+\alpha(1-\alpha)(1-\frac{\alpha}{2})(1-\frac{\alpha}{3})\ldots(1-\
\
\nonumber\\\frac{\alpha}{2n-1})\int_0^1(1-t)^{2n-1}\big|1+e^{i\theta}t\big|^{\alpha-2n}dt,
 \  \theta\in[-\pi,-\frac{2\pi}{3}]  \cup[\frac{2\pi}{3},\pi].
 \end{eqnarray}
 We denote $y=-\cos\theta,$  and   we get
\begin{eqnarray}\label{ta6astsjjkk}
|2-2y|^{\frac{\alpha}{2}}+\alpha{\beta_n}I_n(y)=\big|1+e^{i\theta}
\big|^\alpha+\alpha(1-\alpha)(1-\frac{\alpha}{2})(1-\frac{\alpha}{3})\ldots(1-\
\
\nonumber\\\frac{\alpha}{2n-1})\int_0^1(1-t)^{2n-1}\big|1+e^{i\theta}t\big|^{\alpha-2n}dt,
 \  \theta\in[-\pi,-\frac{2\pi}{3}]  \cup[\frac{2\pi}{3},\pi].
 \end{eqnarray}
Thus  inequality (\ref{ttsjjkk})  can be rewritten  in the following equivalent form
\begin{eqnarray}\label{x5wt3tsjvjkk}
2^\alpha\geq\frac{\alpha\beta_n}{2n}+\big(2-2y\big)^\frac{\alpha}{2}+\alpha\beta_nI_n(y).
  \  y\in[\frac{1}{2},1].
 \end{eqnarray}
  On the other hand,  according to Corollary 1,  we have  
\begin{eqnarray}\label{t3ts1jwlvn56zpx42jk2k}\big(2-2y\big)^\frac{\alpha}{2}(1-\beta_n)+\beta_n+\alpha\beta_n\Big(\frac{1}{2n}+\frac{\pi}{3\sqrt{3}}\Big)=\frac{\alpha\beta_n}{2n}+\big(2-2y\big)^\frac{\alpha}{2}\nonumber\\+\alpha\beta_n\Big(\frac{\pi}{3\sqrt{3}}-\frac{|2-2y|^{\frac{\alpha}{2}}}{\alpha}+\frac{1}{\alpha}\Big)\geq
\frac{\alpha\beta_n}{2n}+\big(2-2y\big)^\frac{\alpha}{2}+\alpha\beta_nI_n(y).
 \end{eqnarray}
The conditions  $y\in[\frac{1}{2},1]$   and  $n\geq27$   imply that  $\big(2-2y\big)^\frac{\alpha}{2}\leq1,$  and  $\frac{1}{2n}+\frac{\pi}{3\sqrt{3}}\leq\frac{65}{100}.$  Consequently we have
\begin{eqnarray}\label{h69990xjj1jv6zx4jk2k}1+\alpha\beta_n\frac{65}{100}\geq
\big(2-2y\big)^\frac{\alpha}{2}(1-\beta_n)+\beta_n+\alpha\beta_n\Big(\frac{1}{2n}+\frac{\pi}{3\sqrt{3}}\Big).
 \end{eqnarray}Since  $\ln2>0.65$   and  $\beta_n<1,$   we get  
\begin{eqnarray}\label{h6o9l9j90xjj1jv6zx4jk2k}2^\alpha\geq1+\alpha\ln2\geq1+\alpha\beta_n\frac{65}{100}.\end{eqnarray}
Finally,   inequalities  (\ref{t3ts1jwlvn56zpx42jk2k}),  (\ref{h69990xjj1jv6zx4jk2k})  and  (\ref{h6o9l9j90xjj1jv6zx4jk2k})
imply  (\ref{x5wt3tsjvjkk}),  and the proof is done.
\end{proof}
Summarizing 
Theorem 1,    Theorem  2  and  Theorem 3   imply the  following corollary.
\begin{corollary}
If  $x\in\mathbb{C}$     with   $|x|=1,$   then the inequality
$${A_{2n-1}}(\alpha,1)\geq|A_{2n-1}(\alpha,x)|$$
holds for every  $\alpha\in(0,1),$    and   $n\in\mathbb{N}^*.$
\end{corollary}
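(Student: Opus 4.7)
The plan is to dispose of the corollary by a straightforward case analysis, relying entirely on the three theorems already quoted and on the fact that every $x\in\mathbb{C}$ with $|x|=1$ can be written as $x=e^{i\theta}$ for some $\theta\in[-\pi,\pi]$.

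First I would fix $\alpha\in(0,1)$, $n\in\mathbb{N}^*$ and $x=e^{i\theta}$ with $\theta\in[-\pi,\pi]$, and split on the size of $n$. If $n\le 26$, Theorem 1 directly yields $|A_{2n-1}(\alpha,x)|\le A_{2n-1}(\alpha,1)$ with no restriction on $\theta$, so in that regime the conclusion is immediate. (Implicit here is that $A_{2n-1}(\alpha,1)>0$, which is visible from the series expansion of $(1-z)^{-1}(1+z)^{\alpha}$ at $z=1$, or more concretely from Lemma 2 once one checks $2^{\alpha}>\alpha\beta_n/(2n)$; this is the only sanity check worth mentioning.)

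Next I would handle the remaining range $n\ge 27$ by a further split according to $\arg(x)$. If $\theta\in[-\tfrac{2\pi}{3},\tfrac{2\pi}{3}]$, Theorem 2 gives $|A_{2n-1}(\alpha,x)|\le A_{2n-1}(\alpha,1)$. If instead $\theta\in[-\pi,-\tfrac{2\pi}{3}]\cup[\tfrac{2\pi}{3},\pi]$, Theorem 3 supplies exactly the same inequality. Since the two subranges cover $[-\pi,\pi]$, every unit-modulus $x$ is handled.

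The only mild obstacle is a boundary/endpoint bookkeeping issue: the integral representation used to derive Theorems 2 and 3 is stated for $\theta\in(-\pi,\pi)$, so the endpoints $\theta=\pm\pi$, corresponding to $x=-1$, need a separate word. I would dispatch these by noting that $A_{2n-1}(\alpha,e^{i\theta})$ is a polynomial in $e^{i\theta}$ of fixed degree and is therefore continuous on the closed unit circle; letting $\theta\to\pm\pi$ from the interior in Theorem 3 yields the inequality at $x=-1$ as well. Combining the three cases then gives $A_{2n-1}(\alpha,1)\ge|A_{2n-1}(\alpha,x)|$ for every $|x|=1$, every $\alpha\in(0,1)$ and every $n\in\mathbb{N}^*$, which is the claim.
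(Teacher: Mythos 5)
Your proposal is correct and matches the paper's argument, which likewise obtains the corollary by combining Theorems 1, 2 and 3 over the complementary ranges of $n$ and $\arg(x)$. The paper gives no further detail, so your added remarks on the endpoints $\theta=\pm\pi$ and the positivity of $A_{2n-1}(\alpha,1)$ are just extra (harmless) care on top of the same decomposition.
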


Murat \c Ca\u glar  and Erhan Deniz\\
Department of Mathematics\\
Faculty of Science and Letters\\
Kafkas  University\\
Kars\\
Turkey\\
e-mail: mcaglar25@gmail.com\\
e-mail : edeniz36@gmail.com\\
\newline
R\'obert Sz\'asz\\
 Department of Mathematics and Informatics\\
 Sapientia Hungarian University of Transylvania\\
 T\^argu-Mure\c s\\
 Romania\\
e-mail: rszasz@ms.sapientia.ro

\end{document}